\documentclass[english,10pt]{paper}
\usepackage{amsmath,amsfonts,amssymb,amsbsy,amsthm}
\usepackage{subfigure}
\usepackage{graphicx}
\usepackage{wrapfig}
\usepackage[unicode=true, pdfusetitle,
 bookmarks=true,bookmarksnumbered=false,bookmarksopen=false,
 breaklinks=false,pdfborder={0 0 1},backref=false,colorlinks=false]
 {hyperref}

\makeatletter

\setcounter{MaxMatrixCols}{10}

\setlength{\hoffset}{-.55in}
\setlength{\voffset}{-0.6in}
\setlength{\textwidth}{6.5in}
\setlength{\textheight}{9in}
\newtheorem{proposition}{Proposition}

\newtheorem{theorem}{Theorem}
\newtheorem{lemma}{Lemma}

\newtheorem{conjecture}{Conjecture}
\pagestyle{myheadings}
\markright{\today}
\theoremstyle{definition}

\def\blfootnote{\xdef\@thefnmark{}\@footnotetext}

\makeatother

\begin{document}

\title{Lanchester Theory and the Fate of Armed Revolts}

\author{Michael P. Atkinson $^{1}$, Alexander Gutfraind $^{2}$, Moshe Kress $^{1}$}

\maketitle

\begin{abstract}
Major revolts have recently erupted in parts of the Middle East with substantial international repercussions. Predicting, coping with and winning those revolts have become a grave problem for many regimes and for world powers.  We propose a new model of such revolts that describes their evolution by building on the classic Lanchester theory of combat. The model accounts for the split in the population between those loyal to the regime and those favoring the rebels.  We show that, contrary to classical Lanchesterian insights regarding traditional force-on-force engagements, the outcome of a revolt is independent of the initial force sizes; it only depends on the fraction of the population supporting each side and their combat effectiveness.  We also consider the effects of foreign intervention and of shifting loyalties of the two populations during the conflict.  The model's predictions are consistent with the situations currently observed in Afghanistan, Libya and Syria (Spring 2011) and it offers tentative guidance on policy.
\end{abstract}

\section{Introduction}
\setcounter{footnote}{3} \blfootnote{$^{1}$Theoretical Division, Los Alamos National Laboratory, Los Alamos, New Mexico,\href{mailto:agutfraind.research@gmail.com}{agutfraind.research@gmail.com}. ~~~ $^{2}$Operations Research Department, Naval Postgraduate School, Monterey, California, \href{mailto:mpatkins@nps.edu}{mpatkins@nps.edu},\href{mailto:mkress@nps.edu}{mkress@nps.edu}.}%
\label{sec:intro} Recent (2011) events in Libya underscore the significant
impact of armed revolts on regional and global interests. 
Armed revolts typically start with demonstrations and civic unrest
that quickly turn into local violence and then full-scale combat. (The terms
revolt, rebellion, and insurgency are interchangeable in most senses and we
use the term revolt throughout for consistency.) As demonstrated in Libya 
the evolution of the armed revolt has a strong spatial component;
individuals in some regions (e.g., parts of Tripoli) may be loyal to the
regime because of ideology or economic and social incentives or fear, while
other regions (e.g., Benghazi) become bastions of the rebels powered by strong
local popular support. Thus, armed revolts, very much like conventional war,
are about gaining and controlling populated territory. However, unlike
conventional force-on-force engagements, where the civilian population plays
a background role, armed revolts are characterized by the active role of the
people, who become a major factor in determining the outcome of the
conflict: both the rebels and the regime need the support of the population
to 
carry out their campaigns~\cite{Hammes06,Kress09,Lynn05}. 

Our approach to modeling armed revolts is based on Lanchester theory~\cite%
{Lanchester16} that describes the strength of two opposing military forces
by two ordinary differential equations (ODEs). The forces cause mutual
attrition that depletes their strengths until one of the forces is defeated.
While Lanchester models are stylized and highly abstract, they have been
extensively used for analysis for almost a century because they provide
profound insights regarding conditions that affect the outcomes of military
conflicts~\cite{Washburn09}.

Using our model, we derive the end-state of the revolt, identify stalemate
situations and study the effects of foreign intervention and of inconstant 
support by the population. We show that contrary to classical Lanchesterian
insights regarding traditional force-on-force engagements, the outcome of a
revolt is independent of the initial force sizes. We also derive conditions
for successful foreign interventions. These results are consistent with the
situations currently (Spring 2011) observed in Afghanistan, Libya and Syria.
We also evaluate policy options facing the international community. 

\section{Setting and Assumptions}

\label{sec:settings} Consider an armed revolt involving two forces, termed
Red and Blue, that rely on the population for manpower, intelligence,
and most other resources. The population is divided into supporters of the
Blue, called henceforth \textit{supporters}, and supporters of Red, called
henceforth \textit{contrarians}. We initially assume that the support
strongly depends on factors such as tribal affiliation, social class, and
ideology and therefore remains unchanged during the armed revolt. However,
later on we relax this assumption and allow for changes in popular behavior,
reflecting pragmatic and opportunistic responses of the population to
changes in the force balance.

We assume that the country is divided between Red and Blue and therefore a
populated region lost by one force is gained by the other force. A force
that fights over a region might be either supported or opposed by the local
population, situations which we call \textit{liberation} or \textit{%
subjugation}, respectively. A liberating force
fights more effectively than a subjugating force because of population
support, \textit{ceteris paribus}. Moreover, the forces in control of hostile regions are busy
policing the population and therefore adopt a defensive posture. Thus,
only the forces operating in friendly regions proactively attempt to capture
additional territories.

\section{Model}

\label{sec:model}

Let $S$ and $C$  ($S+C=1$) denote the fraction of the total population who are
supporters of Blue and supporters of Red (\textquotedblleft
contrarians\textquotedblright ), respectively. Let also $B$ and $%
R$  ($B+R=1$) denote the fraction of the population controlled by Blue and Red,
respectively. We use the notation $XY$ for the fraction of
population $X$ that is controlled by force $Y$, where $X=S,C$ and $Y=B,R$.
Hence, $SB+SR=S$ and $CB+CR=C$. When Blue subjugates a $CR$ region it
becomes part of $CB$ and when Blue liberates an $SR$
area it becomes part of $SB$.
Similar actions are possible by Red, giving a total of four kinds of combat engagements, 
as shown in Figure~\ref{fig:model}.

Because Red and Blue operate in populated areas, the outcome of an
engagement depends both on the strength of the attacking force but also
on the signature (i.e. visibility) of the defending force; smaller attack
force (\textquotedblleft fewer shooters\textquotedblright ) or smaller
signature (\textquotedblleft fewer targets\textquotedblright ) result in a
smaller gain/loss rate. Namely, at each interaction, the gain rate of the
attacker is given by a scaling constant, called henceforth \textit{attrition
rate}, multiplied by the product of the attacking and defending force sizes.
This relationship implies that even a large attacker would struggle to
suppress a small defender diffused in the population. The resulting model is an
adaptation of the Lanchester Linear Law (see e.g.,~\cite{Washburn09} p.~83) 
and Deitchman's guerrilla warfare model~\cite{Deitchman62}.

\begin{figure}[tbp]
\centerline{\includegraphics[width=.45\textwidth]{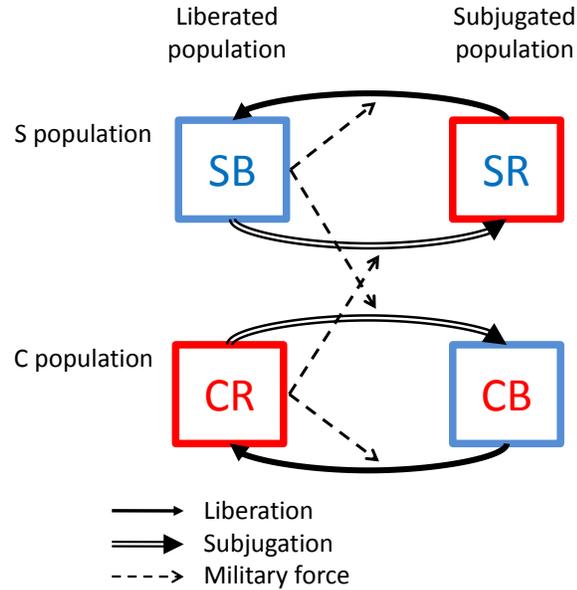}}
\caption{Schematic dynamics of the model. The four variables in the model appear as boxes,
where each box represents a possible combination of population behavior and controlling
force. Solid lines indicate change in control of population while dashed
lines indicate the force causing it. Observe that the population does not
change allegiances even under occupation.}
\label{fig:model}
\end{figure}

The attrition rate constants depend on the tactics and equipment of the parties 
as well as the behavior of the population.
Thus, let $f_{S}$ and $f_{C}$ denote the rates of liberation of friendly regions
by Blue and Red forces, respectively. Similarly, let $h_{C}$ and $h_{S}$ denote
the rates of subjugation of hostile regions by Blue and Red, respectively.
The resulting dynamics are given in Eqs.~\ref{eq:model}.
\begin{align}
SB^{\prime }& =+f_{S}SB\cdot SR-h_{S}CR\cdot SB  \notag \\
SR^{\prime }& =-f_{S}SB\cdot SR+h_{S}CR\cdot SB  \label{eq:model} \\
CR^{\prime }& =+f_{C}CR\cdot CB-h_{C}SB\cdot CR  \notag \\
CB^{\prime }& =-f_{C}CR\cdot CB+h_{C}SB\cdot CR  \notag
\end{align}%
Since it is easier to fight in friendly territory, we make the following 
\textit{dominance assumption:} 
\begin{equation}
f_{S}>h_{C}\mbox{ and }f_{C}>h_{S}\,.  \label{eq:dominance}
\end{equation}%

\section{End-State of the Revolt}

\label{ssec:model:vic} From solving Eqs.~\ref{eq:model} we obtain that the
conflict can result in one of three outcomes, corresponding to the stable
equilibrium points of the equations:

\begin{enumerate}
\item \emph{Blue victory:} $SB+CB=1$,

\item \emph{Red victory:} $CR+SR=1$,

\item \emph{Stalemate:} Both sides control a fraction of the total
population.
\end{enumerate}

It can be shown that the evolution of the conflict does not involve cycles
where populated regions change sides endlessly; rather, the conflict
dissipates and reaches a stable state 
(proofs of this and all other results are given in the Supporting
Information at the end of this article.) 

The stable outcomes are not dependent on all four attrition rates
but rather on two ratios: $r_{S}=\frac{f_{S}}{h_{S}}$ and $r_{C}=\frac{f_{C}}{h_{C}}$.
We call these the \textquotedblleft liberation-subjugation effectiveness
ratio\textquotedblright\ (LSER) of supporters and contrarians, respectively.
These ratios account for differences in tactics, technology, and information
between Blue and Red, and also reflect the ability and commitment
of the local population to support its preferred force. The outcomes are:
\begin{align}
\text{Blue wins if and only if }& \qquad r_{C}<\frac{S}{1-S}
\label{eq:gov-vic} \\
\text{Red wins if and only if }& \qquad r_{S}<\frac{1-S}{S}
\label{eq:ins-vic} \\
\text{Stalemate occurs otherwise.}  \notag
\end{align}
These results\footnote{%
Technically, we assume that at the start of the dynamics both forces have
some presence in a friendly territory, i.e. $SB_{0}>0$ and $CR_{0}>0$.
Otherwise, one of the forces is never challenged and wins trivially. Also,
the model has a fourth equilibrium that corresponds to the case where the
territory is divided between Blue and Red who control only hostile territory
($SR+CB=1$). Obviously, such a situation is very unlikely and indeed this
equilibrium is unstable, as shown in the Supporting Information.}
are summarized in Figure~\ref{fig:equils}(left). It follows
from Ineqs.~\ref{eq:gov-vic}--\ref{eq:ins-vic} that the fate of the armed
revolt is completely determined by the LSERs and the population split between
supporters $S$ and contrarians $C=1-S$; it does not depend on the initial
sizes of the Blue and Red forces. Moreover, the minimum popular support
needed to guarantee Blue's win only depends on the LSER in the contrarians'
territory. Specifically, Blue wins if and only if $r_{C}(1-S)<S,$ that is,
if the fraction of its supporters is larger than the fraction of contrarians
times the LSER in contrarians' territory. An equivalent statement applies for
Red victory, which happens if and only if $r_{S}(1-C)<C$. The operational
implication of these two conditions is that strengthening one's advantage in
friendly territories (e.g., Blue increasing $r_{S}$) may be sufficient to
avoid defeat but not to secure a win; if one is not effectively fighting in
hostile territory (e.g., Blue cannot sufficiently decrease $r_{C}$) then
one cannot win; the best it can hope for is a stalemate. At the
stalemate equilibrium, denoted $XY_{b}$  
\begin{align}
CB_{b}& =\frac{S(1+r_{S})-1}{r_{S}r_{C}-1}, & SR_{b} &=\frac{r_{C}-S(1+r_{C})}{%
r_{S}r_{C}-1},  \label{eq:equil} \\
SB_{b}& =r_{C}CB_{b}, & CR_{b} & =r_{S}SR_{b}.  \notag
\end{align}%
Notice that the denominators are always positive because of the dominance
assumption (see Ineqs.~\ref{eq:dominance}). Eqs.~\ref{eq:equil} indicate that as $S$ increases an increasing part of the population is controlled by Blue.
When $r_{C}$ increases, a larger fraction of the contrarians is able to remain
free (i.e. ruled by Red). 

\begin{figure}[tbp]
\centerline{\includegraphics[width=.4\textwidth]{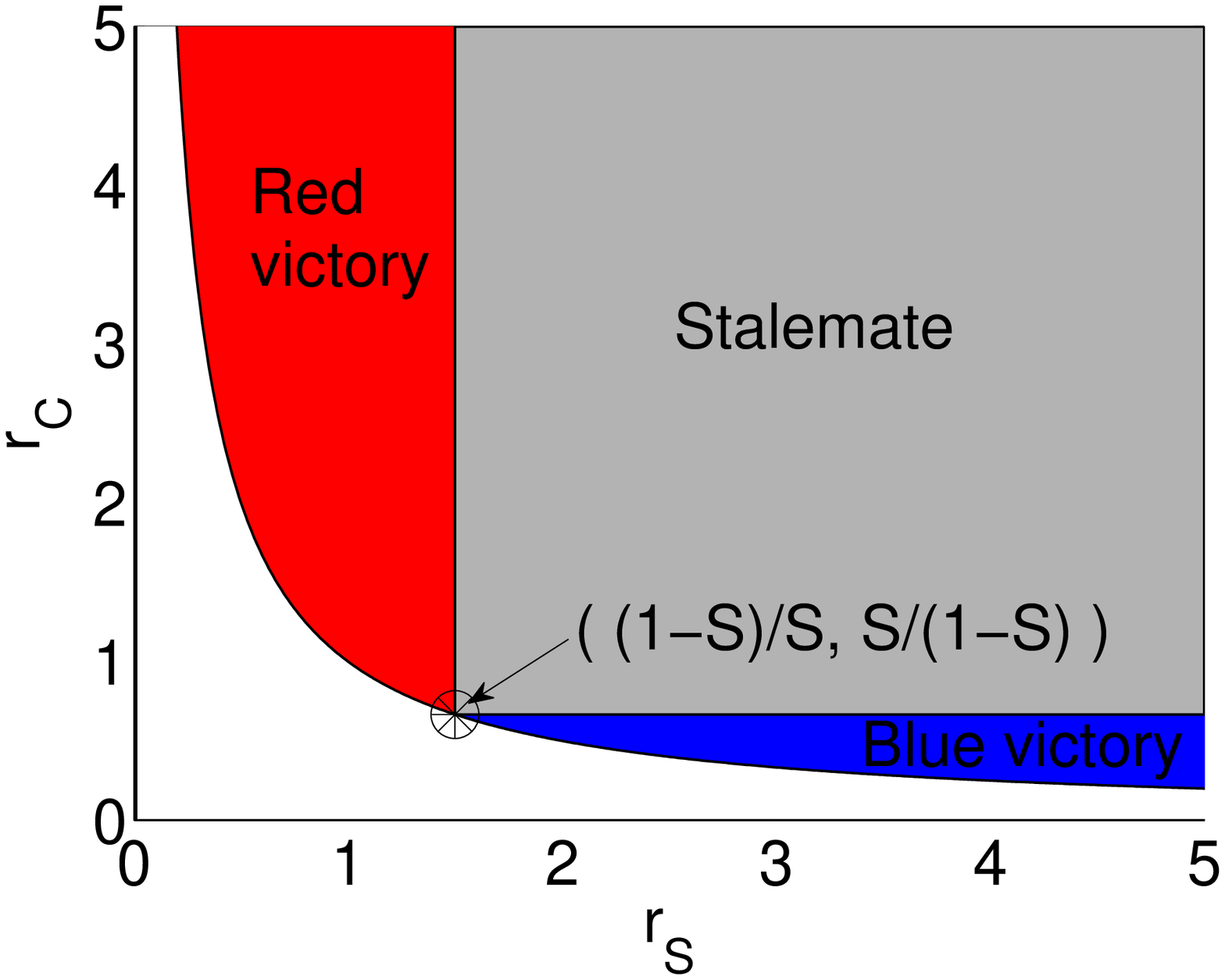}%
\includegraphics[width=.4\textwidth]{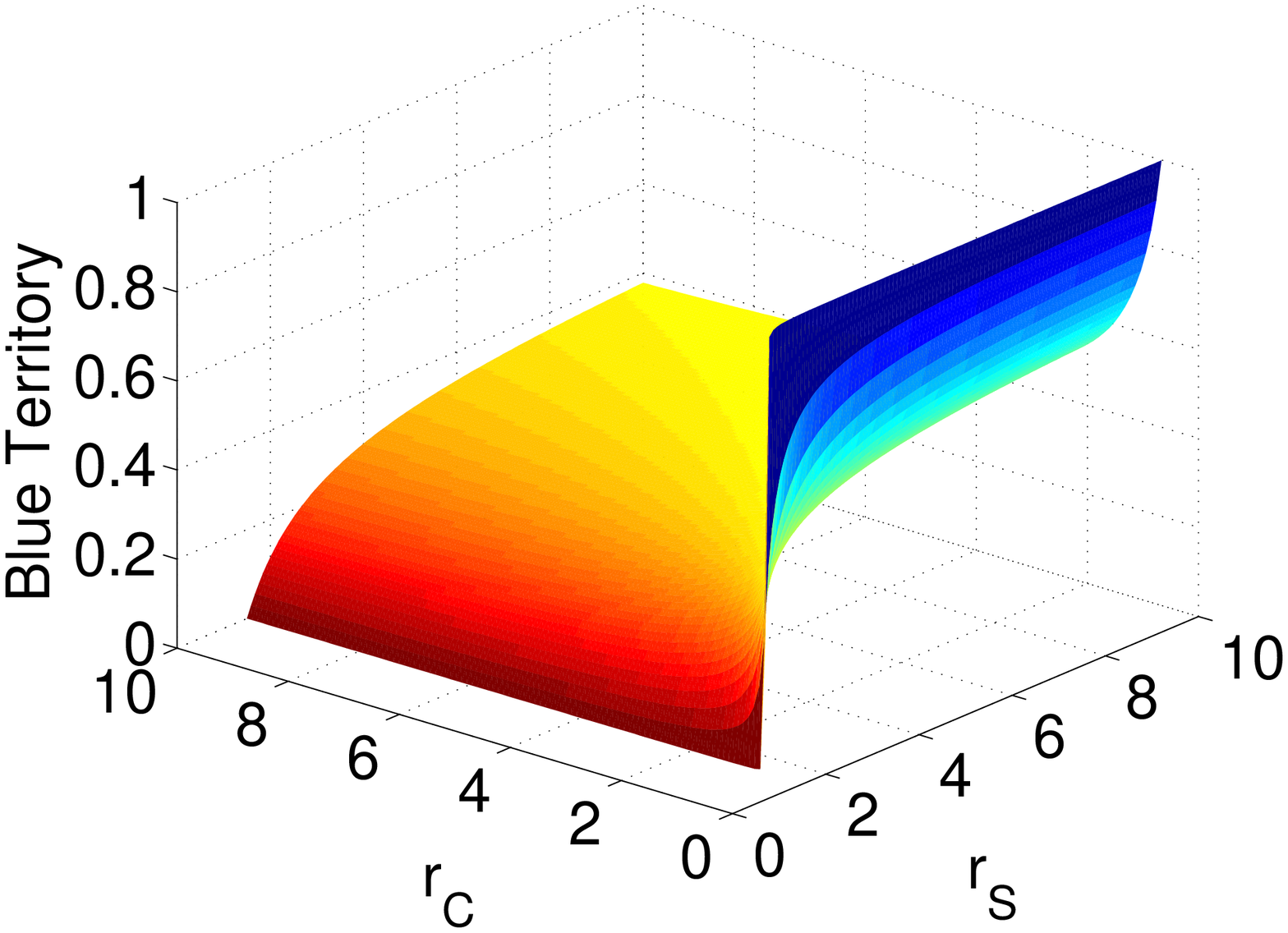}}
\caption{Outcomes of the conflict when $S=40\%$ as a function of $r_{S}$ and 
$r_{C}$. (left) The possible outcomes are: stalemate (gray), Red victory
(red) and Blue victory (blue). The white area is excluded by the
dominance assumption (Ineqs.~\protect\ref{eq:dominance}). Outright victory
is possible only when one party has a low LSER. Increasing $r_{S}$ and $%
r_{C} $ makes both parties much more entrenched in their areas, leading to a
stalemate regardless of the value of $S$. (right) The amount of territory
controlled by Blue. Observe that a very sharp change in the outcome is
predicted as $r_C$ approaches $\frac{2}{3}$, from a balanced stalemate to a
Blue victory.}
\label{fig:equils}
\end{figure}

We plot the fraction of the population controlled
by Blue during a stalemate (i.e., $CB+SB$) in Figure~\ref{fig:equils}(right). 
(We only present the plot for $S=0.4$, but other values of $S$ are qualitatively similar.)
Near the Blue victory condition defined by Ineq.~\ref%
{eq:gov-vic} the fraction of the population controlled by Blue is near one, but
quickly decreases as $r_{C}$ increases. Similarly, the fraction of
population controlled by Blue rapidly increases as $r_{S}$ moves away from
the Red victory condition. However, after the significant initial change in
the fraction of Blue's regions as $r_{S}$ or $r_{C}$ increases, the surface
flattens out. As both $r_{S}$ and $r_{C}$ continue to increase, the fraction
of Blue's regions approaches $S$. Therefore, when $r_{S}$ and $r_{C}$ are
reasonably bounded away from their thresholds, an entrenched stalemate
occurs where Red and Blue control primarily their friendly territories.

\section{Extensions of the Basic model}

\label{sec:xtend} We consider now two extensions of the basic model: the
case of foreign intervention and the case of shifting popular support.

\subsection{Foreign Military Intervention}

\label{ssec:model:fa} Most large revolts in modern times involved
foreign military interventions by regional or
global powers~\cite{Sarkes10,Small82}. Such interventions can be manifested
in two ways: \textit{direct} and \textit{indirect}. Direct intervention
(e.g., air-strike support to ground units, such as the intervention of NATO
forces in Libya in 2011) allows the supported side to exercise more
firepower against its opponent. Indirect intervention provides the supported
side with force multipliers such as intelligence, training, logistical
support and advanced weapons, but no additional firepower per se. 
In both cases we assume that just one side, say Blue, receives the foreign support.
We leave for future studies to consider the case of foreign support to both sides.

\paragraph{Direct intervention.}
For simplicity, suppose that the foreign constituent is tactically superior and it
experiences negligible attrition (e.g., air support for Blue that is subject
to ineffective air defense of Red). 
Therefore, the effectiveness of the
foreign constituent remains fixed throughout the armed revolt. However,
similarly to the direct engagements discussed above, its ability to target
Red diminishes as the size of Red's forces decreases. In that case, Red
targets are harder to find and engage. Let $\lambda _{S},\lambda _{C}>0$
denote the combat power of the foreign constituent when operating in
supporters' $(S)$ regions and contrarians' $(C)$ regions, respectively. The
separation into two combat power parameters allow for the possibility that
the foreign constituent only contributes to certain kinds of operations
(e.g. only to liberating supporters), and/or is affected by the behavior of
the population, just like Blue. In this case Eqs.~\ref{eq:model} become 
\begin{align}
SB^{\prime }& =+f_{S}SB\cdot SR-h_{S}CR\cdot SB+\lambda _{S}SR  \notag \\
SR^{\prime }& =-f_{S}SB\cdot SR+h_{S}CR\cdot SB-\lambda _{S}SR
\label{eq:model-fa} \\
CR^{\prime }& =+f_{C}CR\cdot CB-h_{C}SB\cdot CR-\lambda _{C}CR  \notag \\
CB^{\prime }& =-f_{C}CR\cdot CB+h_{C}SB\cdot CR+\lambda _{C}CR  \notag
\end{align}%
Since the effectiveness of the foreign constituent remains unaffected, it is
clear that Red cannot win. The only two outcomes are Blue's victory and a stalemate. Blue wins if and only if $\lambda
_{C}>f_{C}(1-S)-h_{C}S.$ Otherwise, the armed revolt ends in a stalemate.
Like in the basic model, the conflict dissipates and reaches a stable state,
and no cycles are possible.

An interesting observation is that the value of $\lambda _{S}$ -- the combat
power of the foreign constituent in friendly regions -- plays no role in
helping Blue achieve victory; it only ensures that Blue will not lose as
long as $\lambda _{S}>0$. 
The threshold of $\lambda _{C}$ that determines
Blue's victory is the difference between two terms, each a combination of
combat effectiveness and popular support: $f_{C}(1-S)$ is Red's
effectiveness fighting on friendly territory times its popular support, and $%
h_{C}S$ is Blue's effectiveness fighting on hostile territory times its
popular support. Clearly, {this threshold decreases as the support to Blue
increases.} In particular, a sufficient condition for Blue victory is $%
\lambda _{C}>f_{C}$, which only depends upon the fighting effectiveness of
Red. Consequently, even if Blue has limited tactical capabilities or a small
amount of popular support, it can still prevail with enough assistance from
a foreign constituent.

\paragraph{Indirect intervention.}

Indirect intervention (force multiplier) increases the ability of Blue to
defend its territory and to attack Red forces. Specifically, the liberation
rate $f_{S}$ and the subjugation rate $h_{C}$ are multiplied by factors $%
\mu _{S},\mu _{C}>1$, respectively, where the structure of Eqs.~\ref%
{eq:model} remains unchanged. The LSER values $r_{S}$ and $r_{C}$ change to $%
\mu _{S}r_{S}$ and $\frac{r_{C}}{\mu _{C}}$, respectively. Using the
conditions in Ineq.~\ref{eq:ins-vic} we obtain that for Blue to avoid defeat
it is sufficient that the intervention be such that: 
\begin{equation*}
\mu _{S}\geq \frac{1-S}{r_{S}S}.
\end{equation*}%
In order to secure a win, it follows from Ineq.~\ref{eq:gov-vic} that the
support for Blue must be such that 
\begin{equation*}
\mu _{C}>\frac{r_{C}(1-S)}{S}.
\end{equation*}%
Because $r_{S}r_{C}>1$, the threshold of $\mu _{C}$ is always
larger than the threshold of $\mu _{S}$ -- it is more costly to secure a
victory than to avoid a loss. Obviously, the indirect intervention is needed to
secure a victory only if $S$ is small enough, specifically, if $S<\frac{r_{C}%
}{1+r_{C}}.$ Note that ``small enough'' may actually be quite large when Red
is very effective on its own turf compared with Blue ($r_{C}$ is large).

\subsection{Opportunistic Population}

\label{sec:model:dynamics} While in some conflicts the behavior of the
people is highly polarized and unchanging, in others the population might be quite
opportunistic and favor the side that appears more likely to win. It follows
that the fraction of the supporters, and hence contrarians, changes
according to the state of the conflict. We capture this situation by
treating the fraction of supporters $S$ as a dynamic variable, and adding to
Eqs.~\ref{eq:model} an equation for $S^{\prime}$. The value of $S^{\prime}$
increases with the fraction of population Blue controls ($SB+CB$) and
decreases with the fraction controlled by Red ($CR+SR$). Because $C=1-S$, $%
SR=S-SB$, and $CB=1-S-CR$, we obtain from Eqs.~\ref{eq:model} the three
independent equations: 
\begin{align}
SB^{\prime }& =+f_{S}SB(S-SB)-h_{S}CR\cdot SB  \notag \\
CR^{\prime }& =+f_{C}CR(1-S-CR)-h_{C}SB\cdot CR  \label{eq:model-dynamics} \\
S^{\prime }& =+\alpha (SB+1-S-CR)(1-S)-\alpha (CR+S-SB)S,  \notag
\end{align}%
where $\alpha $ is a parameter that determines the rate at which individuals
switch allegiances, which is assumed to be the same for both the supporters and contrarians. 
With opportunistic population there are only two potential outcomes:

\begin{enumerate}
\item \emph{Blue victory} where the entire population supports Blue, who
controls all regions ($SB=1$), and

\item \emph{Red victory} where the entire population supports Red, who
controls all regions ($CR=1$).
\end{enumerate}

These two equilibria are stable for all parameter values. There are also two
stalemate equilibria: a balanced stalemate where $SB,CR>0$, and a disarmed
stalemate where $SB=CR=0$. Neither of the two stalemate equilibria are
stable. The disarmed stalemate is neither realistic nor relevant. The
balanced stalemate, given below, is more interesting because it lies on a
boundary that separates the basins of attraction for the two victory
situations: 
\begin{align}
SB^{\ast }& =\frac{r_{C}}{2+r_{S}+r_{C}},  \label{eq:balstale:SB} \\
CR^{\ast }& =\frac{r_{S}}{2+r_{S}+r_{C}},  \label{eq:balstale:CR} \\
S^{\ast }& =\frac{1+r_{C}}{2+r_{S}+r_{C}}.  \label{eq:balstale:S}
\end{align}%
Thus, the stalemate equilibrium gives a rough metric for the potential
outcome of the conflict. For example, the closer the stalemate equilibrium is to the Blue
victory point, the more likely Red will win the conflict. This occurs
because most of the 3-dimensional $(r_{S},r_{C},S)$ parameter space lies in
the basin of attraction corresponding to Red victory.

\section{Discussion of Some Armed Revolts}

\label{sec:recentrevolts} Using the results above, we analyze several
ongoing (in 2011) revolts. 
While we could not access reliable data to empirically validate our
model (e.g., estimates of LSERs $r_{S}$ and $r_{C}$ or time-series data of conflicts), 
we show that many of the ongoing conflicts are at a state consistent with our model. 
The model suggests how their outcomes might be effected by decisions including those currently on the policy table.

\subsection*{Libya}

~~~ The available information regarding the revolt in Libya is based mainly
on fragmented news reports. As of June 2011, it appears that at least three
of the seven largest districts in Libya -- Benghazi, Misrata and Az-Zawiya
-- support the rebels, which amount to at least $40\%$ of the population. 
The Qaddafi forces
(labeled Red) are much better trained, equipped and organized than the rebels (labeled Blue).
Therefore, if $S$ denotes the supporters of the rebels, it is reasonable
to assume that $f_{S}<h_{S}$ and $f_{C}>h_{C}$ -- the regime forces have the
upper hand even in hostile regions, which implies that $r_{S}<1$ and $%
r_{C}>1.$ Ineq.~\ref{eq:ins-vic} suggests that Qaddafi should have achieved
a clear victory, crushing the revolt. Indeed, he appeared to be nearing
victory until the intervention of NATO forces that implemented a
no-fly zone and later launched a series of air-strikes against Qaddafi's
forces. In our terminology, NATO forces have applied direct intervention. 
This intervention, if sustained, implies that the rebels now cannot be defeated. 
From news reports it appears that the rebels have also been provided 
training and gear to help them repel Qaddafi's attempts 
at recapturing rebelling population, i.e. increasing the rebels' $r_{S}$.
At the same time, the rebels have been unable to make inroads into
territory currently controlled by Qaddafi, so $r_{C}$ is largely unchanged.
Under those conditions the model suggests that the most likely outcome is 
a protracted stalemate. 
To achieve victory for the rebels, the foreign intervention must focus on
decreasing $r_{C}$ and/or increasing $\lambda_{C}$, that is, attack or
help attack the regions supporting the regime.

\subsection*{Afghanistan}

~~~ One can view the ongoing conflict in Afghanistan (2001-) as a struggle
of government and coalition forces (Blue) against Salafists (Red) led by the
Taliban. Many observers of the conflict point to the critical need of both
parties to win the support of the population, so the conflict is a good
application of our model. According to the 2007 report by the International
Council on Security and Development, the Taliban have permanent presence in
54\% of the country~\cite{afghbrink2007}. Suppose, pessimistically, that the
Taliban movement has the support of all the people in the regions where it
is present. Assuming fixed behavior of the population (no opportunistic
shifts) the situation in Afghanistan will continue in its current stalemate
form unless $r_{S}<1.17$ (giving Red a victory) or $r_{C}<0.85$ (giving Blue
a victory). Thus, the model suggests that the
government can avoid a Taliban takeover of the country by nurturing the
support of the population it currently controls, and it is not necessary to
push back the Taliban from their areas.

\subsection*{Syria}

~~~ The civil unrest in Syria currently pits the Syrian army and
paramilitaries against massive but unarmed demonstrations. If the situation
escalates into an armed revolt, what might be its outcome? It would
appear that special units of the Syrian army possess superior tactics and
weapons sufficient to suppress any domestic military opponent. However, we
anticipate the regime to face a strong challenge because of its narrow base
of support in the Alawite sect. If we assume that loyalties do not shift,
then the rebels (Red) would be victorious against the government
(Blue) if $r_{S}<\frac{1-S}{S}$ (Ineq.~\ref{eq:ins-vic}). Assuming $S\approx
10\%$ (the entire Alawite community~\cite{CIAWFB}), 
the government must have LSER of $r_{S}\geq 9$ to avoid defeat. This
seems unlikely given the discontent even within the Alawite community and
the ease with which foreign backers of the rebels could provide them with
military hardware, such as armor-piercing munitions and air cover to
neutralize government forces. The current (June 2011) observed stalemate is
a result of small foreign intervention by Iranian and Hezbollah combatants.
This intervention is covert and therefore very fragile. Thus, we expect
that in an armed revolt along the lines of our model, Syria's Assad regime would be defeated. 

\section{Conclusions and Policy Implications}

We present a new Lanchester-type model that represents the dynamics of
liberating and subjugating populated regions in the setting of an armed
revolt. We identify winning and stalemate conditions and obtain some general
insights regarding the revolt's end state. Many revolts do not have a
decisive outcome, with both sides entrenched in a prolonged stalemate. Our
model explicitly identifies this realistic outcome, which is not captured in
classical Lanchester theory. Our model also illustrates that it is not
sufficient to ably control friendly regions; for victory it is crucial to be
able to effectively fight in hostile regions. 

We also study the effect of foreign intervention (e.g., NATO intervention in
Libya) on the outcome of a revolt. We find that while direct intervention to
support one side will prevent defeat of that side and can facilitate a win
even if that side has very little popular support, indirect intervention
cannot guarantee this. If the opponent (Red) has strong popular support and
the supported force (Blue) has a poor LSER such that $\frac{1-S}{r_{S}S}>1$,
then if $1<\mu _{S}<\frac{1-S}{r_{S}S},$ the supported force will actually
lose -- the indirect support is simply not large enough. The level of
foreign intervention (either direct or indirect) required to defeat an
opponent depends on the popular support $(S)$ and the attrition coefficients
($f_{C}$ and $h_{C}$) in the contrarians' territory; it does not depend on
the capabilities of the forces in supporters' regions.

Finally if the population can shift its support, then a stalemate is not
possible. A bandwagon-type effect will occur where the population increases
its support to the apparent winner, which strengthens it and leads to more
support, which further strengthens it and so on until the side achieves victory. 
Unlike the case of fixed population behavior, the results of this scenario are sensitive to the initial conditions.

\section*{Acknowledgments}
Part of this work was funded by the Department of Energy at the Los Alamos
National Laboratory through the LDRD program, and by the Defense Threat
Reduction Agency (to AG). AG thanks Aric Hagberg and Feng Pan.
Indexed as Los Alamos Unclassified Report LA-UR-11-03477.

\section{Supporting Information}

\label{sec:proofs} We present here the mathematical proofs of the results
presented in the main text. 

\subsection{Victory Conditions}

\label{sec:proofs:vic} 
In general, dynamical systems may exhibit stable oscillations. We now show
that the system of equations in Eqs.~\ref{eq:model} does not have those
oscillations (no limit cycles). This means that the state variables will
always reach one of the equilibrium points (by the Poincar\'{e}-Bendixon
Theorem~\cite{Strogatz00}).

\begin{theorem}[Dulac's Criterion \protect\cite{Strogatz00}]
Let $\dot{x}=f(x)$ be a smooth system on a simply-connected set $S\subset%
\mathbb{R}^{2}.$ Let $w:S\to\mathbb{R}$ be smooth on $S$. Suppose on $S$ the
expression $\nabla\cdot\left(f(x)w(x)\right)$ does not change sign. Then the
system has no limit cycle on $S$.
\end{theorem}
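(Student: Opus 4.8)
The plan is to prove the criterion by contradiction, reducing the nonexistence of a closed orbit to a clash between a line integral and the planar divergence theorem. First I would suppose, to the contrary, that the system $\dot{x}=f(x)$ admits a periodic orbit $\Gamma$ contained in $S$. Since $S$ is simply connected and $\Gamma$ is a simple closed curve lying in $S$, the Jordan curve theorem guarantees that $\Gamma$ bounds a region $A$ whose closure is contained in $S$. This is exactly where simple-connectedness is used: it forbids the orbit from encircling a ``hole'' in the domain, which would otherwise leave part of the enclosed region outside $S$ and invalidate the integral identity below.

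The heart of the argument is Green's theorem (the planar divergence theorem) applied to the rescaled field $fw$ over $A$:
\[
\iint_A \nabla\cdot\bigl(f(x)w(x)\bigr)\,dA \;=\; \oint_{\Gamma}\bigl(f(x)w(x)\bigr)\cdot\hat{n}\,d\ell,
\]
where $\hat{n}$ is the outward unit normal along $\Gamma=\partial A$. The key geometric observation is that $\Gamma$ is itself a trajectory, so the velocity $\dot{x}=f(x)$ is everywhere tangent to $\Gamma$; hence $f\cdot\hat{n}=0$ at every point of the curve, and because $w$ enters only as a scalar multiplier, the full integrand satisfies $(fw)\cdot\hat{n}=w\,(f\cdot\hat{n})=0$ along $\Gamma$. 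Therefore the right-hand line integral is exactly zero.

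I would then close the argument by comparing this with the left-hand area integral. By hypothesis $\nabla\cdot(fw)$ does not change sign on $S$; if it is, say, strictly positive (or strictly negative) on the interior $A$, then the area integral is strictly positive (respectively negative), contradicting the vanishing of the line integral. This contradiction shows that no such periodic orbit $\Gamma$ can exist, which is the assertion of the criterion.

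The main obstacle is the gap between ``does not change sign'' and ``is nonzero where it matters'': the contradiction requires $\nabla\cdot(fw)$ to be nonzero on a subset of $A$ of positive area, not merely of a single sign, since a divergence identically zero on $A$ would make both integrals vanish and collapse the argument. In the intended application to Eqs.~\ref{eq:model} this is resolved by choosing the Dulac multiplier $w$ so that the computed divergence is of one strictly nonvanishing sign throughout the invariant region of interest; producing such a $w$ and verifying its sign by direct differentiation is the only genuine computation, while the stipulated smoothness of $f$ and $w$ guarantees that Green's theorem applies without further regularity concerns.
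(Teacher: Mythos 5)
The paper does not prove this theorem at all: it is quoted verbatim as a known result from Strogatz and immediately applied (with the explicit multiplier $w(SB,CR)=1/\bigl(SB(S-SB)\,CR(C-CR)\bigr)$) to establish Proposition~\ref{prop:nocycle-basic}. Your argument is the standard textbook proof of Dulac's criterion and it is correct: assume a closed orbit $\Gamma\subset S$, use simple-connectedness so that the Jordan region $A$ bounded by $\Gamma$ lies in $S$, apply Green's theorem to $fw$ over $A$, and observe that the flux integral vanishes because $f$ (hence $fw$) is tangent to the trajectory $\Gamma$, contradicting the nonvanishing of the area integral. You also correctly isolate the one genuinely delicate point, namely that ``does not change sign'' must be read as ``has one strict sign on a set of positive area'' for the contradiction to bite; in the paper's application this is automatic, since the computed divergence is strictly negative everywhere on $(0,S)\times(0,C)$. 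Since the authors treat the criterion as a black box, your contribution is strictly additional content rather than an alternative to anything in the paper; if anything, it clarifies why the authors must (and do) verify a strict inequality $\nabla\cdot(fw)<0$ rather than merely a weak one when they invoke the criterion in Propositions~\ref{prop:nocycle-basic} and~\ref{prop:nocycle-intervention}.
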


\begin{proposition}
\label{prop:nocycle-basic} The system of Eqs.~\ref{eq:model} does not have
limit cycles.
\end{proposition}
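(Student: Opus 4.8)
The plan is to apply Dulac's Criterion after first collapsing the four-dimensional system to a planar one. The key observation is that Eqs.~\ref{eq:model} conserve both $SB+SR$ and $CR+CB$, since $SB'+SR'=0$ and $CR'+CB'=0$. In the basic model the population split is fixed, so $SB+SR=S$ and $CR+CB=1-S$ hold throughout the evolution. First I would use these two invariants to eliminate $SR=S-SB$ and $CB=(1-S)-CR$, reducing the dynamics to the planar system in $x:=SB$ and $y:=CR$:
\begin{align}
x' &= x\bigl[f_S(S-x)-h_S y\bigr], \notag \\
y' &= y\bigl[f_C(1-S-y)-h_C x\bigr]. \notag
\end{align}
The relevant phase space is the rectangle $0\le x\le S$, $0\le y\le 1-S$, which is simply connected as the theorem requires.

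Next I would take the Dulac weight $w(x,y)=1/(xy)$, the standard choice for Lotka--Volterra-type interactions. After multiplying, $w\,x'=f_S(S-x)/y-h_S$ and $w\,y'=f_C(1-S-y)/x-h_C$, so the coupling terms disappear under differentiation and the divergence collapses to
\begin{equation*}
\nabla\cdot(w\,f)=\frac{\partial(w\,x')}{\partial x}+\frac{\partial(w\,y')}{\partial y}=-\frac{f_S}{y}-\frac{f_C}{x}.
\end{equation*}
Since $f_S,f_C>0$ and $x,y>0$ in the interior, this quantity is strictly negative everywhere and hence never changes sign, so Dulac's Criterion rules out any limit cycle lying in the open rectangle.

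The one delicate point -- and the main obstacle -- is that $w=1/(xy)$ blows up on the boundary segments $x=0$ and $y=0$, so the criterion as stated applies only to the interior. I would close this gap by observing that those two segments are invariant (if $x=0$ then $x'=0$, and likewise for $y=0$), so any periodic orbit, being itself a trajectory, can neither touch nor cross them and must therefore lie entirely in the interior where the sign argument holds. The remaining edges cause no difficulty: on $x=S$ one has $x'=-h_S S\,y\le 0$ and on $y=1-S$ one has $y'=-h_C(1-S)x\le 0$, so the flow points inward and the rectangle is forward-invariant. Together these facts show no limit cycle can exist, and Poincaré--Bendixson then forces every trajectory to converge to one of the equilibria of Section~\ref{ssec:model:vic}.
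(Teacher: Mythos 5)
Your proof is correct and takes essentially the same route as the paper: reduce to the planar system in $(SB,CR)$ via the conserved quantities and apply Dulac's criterion. The only difference is the choice of weight---you use the Lotka--Volterra weight $1/(SB\cdot CR)$, giving divergence $-f_S/CR - f_C/SB$, while the paper uses $1/\bigl(SB(S-SB)\,CR(C-CR)\bigr)$---and your explicit handling of the invariant boundary segments is a slightly more careful version of the paper's parenthetical remark.
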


\begin{proof}
We first write the model as a system of two independent equations by removing
the mixed variables:
\begin{align}
SB^{\prime } & = +f_{S}SB(S-SB)-h_{S}CR\cdot SB \label{eq:2varSB}\\
CR^{\prime } & = +f_{C}CR(C-CR)-h_{C}SB\cdot CR \label{eq:2varCR}
\end{align}

\noindent Let \[
w(SB,CR)=\frac{1}{SB(S-SB) CR(C-CR)}.\]
Both the system and $w$ are smooth on the set $(0,S)\times(0,C)$ (points on
the boundary of this space move to one of the fixed points and do not oscillate.)

\begin{align*}
\nabla\cdot\left(fw\right) & = \nabla\cdot\left(\frac{f_{S}(S-SB)-h_SCR}{(S-SB) CR(C-CR)},\frac{f_C(C-CR)-h_CSB}{SB(S-SB)(C-CR)}\right)\\
 & = \frac{-h_S}{(S-SB)^{2}(C-CR)}+\frac{-h_C}{(S-SB)(C-CR)^{2}} < 0.
\end{align*}
\end{proof}

We next show that the victory conditions in \ref{eq:gov-vic}, \ref%
{eq:ins-vic} correspond to the stability conditions for the equilibria
points. Throughout, we assume that $0<S<1$, i.e. $S$ is not on its boundary.

\begin{theorem}
\label{thm-vic-con} The following four statements hold for the system of
differential equations defined by Eqs.~\ref{eq:model}:

\begin{itemize}
\item The equilibrium $CR = SB = 0$ and $SR = 1 - CB = S$ is never stable.

\item The Blue victory equilibrium ($CR = SR = 0$ and $SB = 1 - CB = S$) is
stable if and only if $r_{C}<\frac{S}{1-S}$.

\item The Red victory equilibrium ($SB = CB = 0$ and $CR = 1 - SB = 1-S$)
is stable if and only if $r_{S}<\frac{1-S}{S}$.

\item The stalemate equilibrium (defined by Eqs.~\ref{eq:equil})) is stable
if and only if $r_{C}\geq \frac{S}{1-S}$ and $r_{S}\geq \frac{1-S}{S}$.
\end{itemize}
\end{theorem}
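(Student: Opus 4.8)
The plan is to linearize the reduced two-variable system in Eqs.~\ref{eq:2varSB}--\ref{eq:2varCR} about each equilibrium and read off stability from the eigenvalues of its Jacobian. Writing $x=SB$, $y=CR$ and $C=1-S$, the system is $\dot x = x(f_S(S-x)-h_S y)$ and $\dot y = y(f_C(C-y)-h_C x)$, with Jacobian entries $\partial_x\dot x = f_S S - 2f_S x - h_S y$, $\partial_y \dot x = -h_S x$, $\partial_x \dot y = -h_C y$, and $\partial_y \dot y = f_C C - 2 f_C y - h_C x$. I would evaluate this matrix at each of the four candidate equilibria; since stability of a hyperbolic fixed point is governed by the signs of the eigenvalues, a point is stable exactly when both eigenvalues have negative real part.

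For the three boundary equilibria the computation is short because the Jacobian is triangular. At the disarmed point $(x,y)=(0,0)$ it is diagonal with entries $f_S S>0$ and $f_C C>0$, so both eigenvalues are positive and the point is never stable. At the Blue-victory point $(x,y)=(S,0)$ the matrix is upper triangular with diagonal entries $-f_S S<0$ and $f_C C - h_C S$; the latter is negative precisely when $f_C(1-S)<h_C S$, i.e. $r_C<\frac{S}{1-S}$. Symmetrically, at the Red-victory point $(x,y)=(0,C)$ the matrix is lower triangular with diagonal entries $f_S S - h_S C$ and $-f_C C<0$, giving stability exactly when $r_S<\frac{1-S}{S}$. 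Each condition matches the claimed inequality.

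The interior stalemate equilibrium is the crux. There neither coordinate vanishes, so the equilibrium relations $f_S(S-x)=h_S y$ and $f_C(C-y)=h_C x$ hold; substituting them into the diagonal entries collapses $\partial_x\dot x$ to $-f_S x$ and $\partial_y\dot y$ to $-f_C y$, leaving
\[
J=\begin{pmatrix}-f_S x & -h_S x\\ -h_C y & -f_C y\end{pmatrix}.
\]
Its trace $-(f_S x + f_C y)$ is negative, and its determinant $xy(f_S f_C - h_S h_C)$ is positive by the dominance assumption $f_S>h_C$, $f_C>h_S$ (Ineq.~\ref{eq:dominance}). Hence whenever this equilibrium lies in the open rectangle $(0,S)\times(0,C)$ it is automatically asymptotically stable, and the remaining task is to match its existence to the stated inequalities. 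Using the closed forms in Eqs.~\ref{eq:equil} (whose common denominator $r_S r_C-1$ is positive, since dominance forces $r_S r_C>1$), positivity of $SR_b$ is equivalent to $r_C>\frac{S}{1-S}$ and positivity of $CB_b$ to $r_S>\frac{1-S}{S}$, with the boundary equalities corresponding to the stalemate merging into a victory equilibrium.

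Finally I would knit the four cases together using $r_S r_C>1$: the inequalities $r_C<\frac{S}{1-S}$ and $r_S<\frac{1-S}{S}$ cannot hold at once, since their product would force $r_S r_C<1$, and whenever one fails the complementary victory condition necessarily holds, so exactly one stable regime applies for each parameter choice. The main obstacle is thus not dynamical but the bookkeeping around the interior equilibrium—using the equilibrium identities to simplify the Jacobian and then matching interior-existence to the negations of the two victory conditions—since Proposition~\ref{prop:nocycle-basic} already excludes limit cycles and guarantees convergence to one of these equilibria.
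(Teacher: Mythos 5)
Your proposal is correct and follows essentially the same route as the paper: reduce to the two-variable system of Eqs.~\ref{eq:2varSB}--\ref{eq:2varCR}, linearize, read off the triangular Jacobians at the three boundary equilibria, and apply the trace--determinant criterion at the interior stalemate (your use of the equilibrium relations to collapse the diagonal entries to $-f_S x$ and $-f_C y$ is exactly the simplification the paper carries out by direct algebra). The only cosmetic difference is the ``only if'' direction for the stalemate: the paper (via its Lemma~\ref{lem:1}) observes that outside the stated parameter range exactly one of $SB_b, CR_b$ is negative, so your determinant $xy(f_S f_C - h_S h_C)$ turns negative and the point is a saddle, which is slightly sharper than saying the equilibrium merely fails to lie in the physical rectangle.
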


\begin{proof}
The model is fully specified based on two variables: $SB$ and $CR$, Eqs.~\ref{eq:2varSB}-\ref{eq:2varCR}. We first compute the Jacobian of the right hand size of differential equation
\begin{align*}
\mathbf{J}(SB,CR) = 
\begin{pmatrix}
f_{S}(S-2SB)-h_S CR & -h_SSB\\
-h_C CR & f_C(1-S-2CR)-h_CSB
\end{pmatrix}
\end{align*}
A solution $(SB^*,CR^*)$ to the differential equation is stable if the two eigenvalues of $\mathbf{J}(SB^*,CR^*)$ have negative real parts~\cite{Strogatz00}.  
By inspection the equilibrium with $SB=CR=0$ is not stable for any parameter values. 

\noindent {\bf Blue Victory}\\
The characteristic polynomial in this case is 
\begin{align*}
(-f_{S}S - \lambda)(f_C(1-S)-h_CS - \lambda)=0\,.
\end{align*}
The first eigenvalue, $-f_{S}S$, is always negative and the second  eigenvalue, $f_C(1-S)-h_CS$, is negative if $r_{C}<\frac{S}{1-S}$.

\noindent {\bf Red Victory}\\
The characteristic polynomial in this case is 
\begin{align*}
(-f_C(1-S) - \lambda)(f_{S}S-h_S (1-S) - \lambda)=0\,.
\end{align*}
The first eigenvalue, $-f_C(1-S)$, is always negative and the second eigenvalue, 
$f_{S}S-h_S (1-S)$, is negative if $r_{S}<\frac{1-S}{S}$.  
By   the dominance assumption (i.e., Ineq.~\ref{eq:dominance}))  $r_Sr_C>1$ and  thus $\frac{1}{1+r_S}<\frac{r_C}{1+r_C}$.  
Therefore it is not possible for both the Blue victory and Red victory to be stable equilibria for the same values of $r_S$ and $r_C$.  
This also implies that if  
$r_{S}<\frac{1-S}{S}$ a Blue victory cannot occur and if $r_{C}<\frac{S}{1-S}$ a Red victory cannot occur. 

\noindent {\bf Stalemate}\\
At the stalemate equilibrium in section~\ref{ssec:model:vic}, the variables have the values $SB_b$, $SR_b$, $CR_b$, and $CB_b$.
We next present a lemma, and then prove that the stalemate equilibrium is stable if and only if $\frac{1}{1+r_S} < S < \frac{r_C}{1+r_C} $, which is equivalent to the two conditions $ r_{C}\geq \frac{S}{1-S}$ and $r_{S}\geq \frac{1-S}{S}$
\begin{lemma}
\label{lem:1}
The product $SB_b CR_b$ is positive if and only if $\frac{1}{1+r_S} < S < \frac{r_C}{1+r_C}$.
\end{lemma}
\begin{proof}
By Eqs.~\ref{eq:equil}, $SB_b$ is positive if $\frac{1}{1+r_S} < S $, and $CR_b$ is positive if $S < \frac{r_C}{1+r_C}$.     By the dominance assumption (Ineq.~\ref{eq:dominance}) $\frac{1}{1+r_S}<\frac{r_C}{1+r_C}$, and therefore it is impossible for  $SB_b$ and  $CR_b$ to both be negative.
\end{proof}
Lemma~\ref{lem:1} and Eqs.~\ref{eq:equil} imply that if $\frac{1}{1+r_S} < S < \frac{r_C}{1+r_C}$, then $SB_b$, $SR_b$, $CR_b$, and $CB_b$ are all positive (and by conservation of total population, less than 1).  The Jacobian matrix for the stalemate equilibrium is 
\begin{align*}
\mathbf{J}(SB_b,CR_b) = 
\begin{pmatrix}
-f_{S}SB_b & -h_SSB_b\\
-h_C CR_b & -f_CCR_b
\end{pmatrix}
\end{align*}
We derive the upper left hand element of this Jacobian below
\begin{align*}
\mathbf{J}_{11}(SB_b,CR_b) &= f_{S}(S-2SB_b)-h_SCR_b\\
&= h_S(r_S (S-2SB_b)-r_SSR_b)\\
&= f_{S}\left(S-\frac{2Sr_C(1+r_S)-2r_C}{r_Sr_C-1}-\frac{r_C-S(1+r_C)}{r_Sr_C-1}\right)\\
&= -f_{S}SB_b
\end{align*}
The lower right hand element of $\mathbf{J}(SB_b,CR_b)$ can be derived in a similar fashion and we omit the details.  
The eigenvalues of $\mathbf{J}(SB_b,CR_b)$ will both have a negative real component if the trace of $\mathbf{J}(SB_b,CR_b)$ is negative and the determinant is positive.
The determinant of $\mathbf{J}(SB_b,CR_b)$  is $ SB_b CR_b h_Ch_S(r_Sr_C-1)$.  
If $\frac{1}{1+r_S} < S < \frac{r_C}{1+r_C}$, then by Lemma~\ref{lem:1} the trace is negative 
and the determinant is positive and thus the stalemate equilibrium is stable. 
If $S \notin \left(\frac{1}{1+r_S}, \frac{r_C}{1+r_C}\right)$ then by Lemma~\ref{lem:1}
the determinant of $\mathbf{J}(SB_b,CR_b)$  is negative and thus one of the eigenvalues has a positive real component and the stalemate equilibrium is not stable.
\end{proof}

\subsection{Direct Foreign Intervention}

\label{sec:proofs:fa} Let us rewrite the direct intervention dynamics of
Eqs.~\ref{eq:model-fa}: 
\begin{align}
SB^{\prime }& =+f_{S} \left(SB + \frac{\lambda_S}{f_S}\right) SR-h_{S}
CR\cdot SB  \notag \\
SR^{\prime }& =-f_{S} \left(SB + \frac{\lambda_S}{f_S}\right) SR+h_{S}
CR\cdot SB  \label{eq:model-directint} \\
CR^{\prime }& =+f_{C} CR\cdot CB-h_{C} \left(SB + \frac{\lambda_C}{h_C}%
\right) CR  \notag \\
CB^{\prime }& =-f_{C} CR\cdot CB+h_{C} \left(SB + \frac{\lambda_C}{h_C}%
\right) CR  \notag
\end{align}%
Furthermore let us define $A_S \equiv \frac{\lambda_S}{f_S}$ and $A_C \equiv 
\frac{\lambda_C}{h_C}$. 
The stalemate equilibrium of these equations is denoted with a
subscript $fi$ (``foreign intervention'') and we present them below. 
\begin{align}  
SB_{fi} & = \frac{SB_b}{2} - \frac{A_S}{2} + \frac{A_C-A_S}{2(r_Sr_C-1)} & + 
\sqrt{\left( \frac{SB_b}{2} - \frac{A_S}{2} + \frac{A_C-A_S}{2(r_Sr_C-1)}%
\right)^2 + \frac{r_Sr_CA_SS}{r_Sr_C-1} }  \label{eq:SG-fa} \\[10pt]
SR_{fi} & =S - SB_{fi} & \label{eq:SR-fa}  \\[10pt]
CB_{fi} & = \frac{SB_{fi} + A_C}{r_C} & \label{eq:CB-fa}  \\[10pt]
CR_{fi} & = 1-S - CB_{fi} & \label{eq:CR-fa} 
\end{align}
\noindent ($SB_b$ is the value at stalemate of the variable $SB$ in the
basic model, Eqs.~\ref{eq:equil}.) To derive these expressions we first
write the analog of Eqs.~\ref{eq:2varSB}--\ref{eq:2varCR}: 
\begin{align}
SB^{\prime } & = +f_{S}(SB+A_S)(S-SB)-h_SCR\cdot SB  \label{eq:2varSB-fa} \\
CR^{\prime } & = +f_CCR(1-S-CR)-h_C(SB+A_C) CR  \label{eq:2varCR-fa}
\end{align}
Solving Eqs.~\ref{eq:2varSB-fa}--\ref{eq:2varCR-fa} for the stalemate
equilibrium root results in two equations 
\begin{align}
f_{S}(SB+A_S)(S-SB) & = h_SCR\cdot SB  \label{eq:stale-faI} \\
f_C(1-S-CR)& = h_C(SB+A_C)  \label{eq:stale-faII}
\end{align}
Solving Eq.~\ref{eq:stale-faI} for $CR$ and substituting into Eq.~\ref%
{eq:stale-faII} produces a quadratic in $SB$. Solving for the positive root
of that quadratic yields the expression for $SB_{fi}$ in Eq.~\ref{eq:SG-fa}.
Substituting $SB_{fi}$ into Eq.~\ref{eq:stale-faII} gives $CB_{fi}$ in Eq.~%
\ref{eq:CB-fa}. \noindent Similarly to the basic model, it is possible to
exclude cycles, as follows.

\begin{proposition}
\label{prop:nocycle-intervention} The set of Eqs.~\ref{eq:model-fa} does not
have limit cycles.
\end{proposition}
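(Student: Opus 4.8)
The plan is to follow verbatim the strategy of Proposition~\ref{prop:nocycle-basic}: reduce the four-dimensional dynamics to a planar system, exhibit a Dulac function whose weighted divergence is sign-definite on a simply-connected region, and invoke Dulac's criterion. The reduction is already in hand. Under direct intervention $S$ is still fixed, so on the rectangle $0\le SB\le S$, $0\le CR\le C$ (with $C=1-S$) the system collapses to the two equations~\ref{eq:2varSB-fa}--\ref{eq:2varCR-fa} in $SB$ and $CR$. Write the right-hand side as $f=(f_1,f_2)$ with $f_1=f_S(SB+A_S)(S-SB)-h_SCR\cdot SB$ and $f_2=f_CCR(C-CR)-h_C(SB+A_C)CR$.

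First I would reuse the basic-model weight
\[
w(SB,CR)=\frac{1}{SB(S-SB)\,CR(C-CR)},
\]
which is smooth and strictly positive on the open rectangle $(0,S)\times(0,C)$. As in the basic case, every boundary segment is either invariant or crossed transversally, and on each the reduced flow is monotone: the segment $CR=0$ is invariant because $f_2$ carries the factor $CR$, and there $SB'=f_S(SB+A_S)(S-SB)$ is a monotone one-dimensional flow; on $SB=0$ one computes $SB'=f_SA_SS>0$, so the flow points strictly inward; the remaining two edges are handled the same way. Hence no periodic orbit can meet the boundary, and it suffices to exclude cycles in the open interior, where $w$ is defined. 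I would then differentiate term by term and expect the weighted divergence to come out as a sum of manifestly negative summands,
\[
\nabla\cdot(fw)=-\frac{f_SA_S}{SB^{2}\,CR(C-CR)}-\frac{h_S}{(S-SB)^{2}(C-CR)}-\frac{h_C(SB+A_C)}{SB(S-SB)(C-CR)^{2}}<0,
\]
valid throughout $(0,S)\times(0,C)$. The intervention parameters only reinforce the conclusion: $A_S>0$ contributes the extra negative first term and $A_C>0$ makes the third term more negative. Since $\nabla\cdot(fw)$ does not change sign, Dulac's criterion forbids a limit cycle, which is the claim.

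The one place the argument genuinely departs from Proposition~\ref{prop:nocycle-basic}, and the only step I would check with care, is that $f_1$ no longer factors through $SB$: the source term $f_SA_S(S-SB)$ survives at $SB=0$, so the $SB$ in the denominator of $w$ does not cancel cleanly as it did in the basic model. The point to verify is therefore that this \emph{same} $w$ (rather than one with an inserted factor $SB+A_S$) still yields a sign-definite expression; the displayed computation shows it does, so I anticipate no real obstacle. As a fallback, the alternative weight $w=\bigl[(SB+A_S)(S-SB)\,CR(C-CR)\bigr]^{-1}$ also works, at the cost of a brief monotonicity check by logarithmic differentiation, and could be substituted if a cleaner derivation is preferred.
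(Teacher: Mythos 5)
Your proof is correct and follows essentially the same route as the paper: reduce to the planar system~\ref{eq:2varSB-fa}--\ref{eq:2varCR-fa} and apply Dulac's criterion with a reciprocal-product weight on the open rectangle $(0,S)\times(0,C)$. The only difference is the specific weight --- the paper replaces the factor $SB(S-SB)$ by $(SB+A_S)$, taking $w=1/\bigl[(SB+A_S)\,CR\,(1-S-CR)\bigr]$, whereas you retain the basic-model weight; both choices give a manifestly negative weighted divergence, and your displayed computation (including the extra $-f_SA_S/[SB^2\,CR(C-CR)]$ term and the boundary check at $SB=0$) is accurate.
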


\begin{proof}
We will work with the two independent Eqs.~\ref{eq:2varSB-fa} and~\ref{eq:2varCR-fa}.

\noindent Let \[
w(SB,CR)=\frac{1}{(SB+A_S) CR(1-S-CR)}.\]
Both the system and $w$ are smooth on the set $(0,S)\times(0,1-S)$. 

\begin{align*}
w SB^{\prime } & = \frac{f_{S}(SB+A_S)(S-SB)-h_S CR\cdot SB}{(SB+A_S) CR(1-S-CR)} \\
               & = \frac{f_{S}(S-SB)}{CR(1-S-CR)} - \frac{h_S}{(1-S-CR)}\left(1+\frac{A_S}{SB}\right)^{-1}\\
w CR^{\prime } & = \frac{f_C CR(1-S-CR)-h_C(SB+A_C) CR}{(SB+A_S) CR(1-S-CR)} \\
               & = \frac{f_C}{SB+A_S} - \frac{h_C(SB+A_C)}{(SB+A_S)(1-S-CR)}.
\end{align*}

\noindent Note that $1-S-CR = C-CR > 0$ in the strictly positive quadrant.  Therefore, 
\begin{align*}
\nabla\cdot\left(fw\right) & = \frac{-f_{S}}{CR(1-S-CR)} - \frac{h_S}{(1-S-CR)}(-1)\left(1+\frac{A_S}{SB}\right)^{-2}\frac{-A_S}{{SB}^2}\\
                           & + 0  - \frac{h_C(SB+A_C)}{(SB+A_S)(1-S-CR)^2} < 0.
\end{align*}
\end{proof}

Before proceeding to examine the stability properties of the victory
equilibrium and the stalemate equilibrium, we note there are two other
equilibrium points to the system defined by Eqs.~\ref{eq:model-fa}: $CR=0$, $%
SB = -\frac{\lambda_S}{f_S} $ and an equilibrium similar to Eqs.~\ref%
{eq:SG-fa}--\ref{eq:CR-fa}, but with $SB_{fi}$ the negative root of the
quadratic that produced Eq.~\ref{eq:SG-fa}. Because both of these equilibria
consist of negative values, which cannot be realized, we do not analyze them
further. We next show that the victory condition defined in section~\ref%
{ssec:model:fa} corresponds to the stability conditions for the equilibria
points.

\begin{theorem}
\label{thm-vic-con-fa} For the system of differential equations defined by
Eqs.~\ref{eq:model-directint}, the Blue victory equilibrium ($CR = SR = 0$)
is stable if and only if $\frac{r_C-A_C}{1+r_C} < S$.
\end{theorem}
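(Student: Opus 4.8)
The plan is to follow exactly the linearization strategy already used in the proof of Theorem~\ref{thm-vic-con}. First I would pass to the two independent equations~\ref{eq:2varSB-fa}--\ref{eq:2varCR-fa} and locate the Blue victory equilibrium in these coordinates. Since $SR = S - SB$, the condition $CR = SR = 0$ is equivalent to $(SB,CR) = (S,0)$, and a direct substitution into Eqs.~\ref{eq:2varSB-fa}--\ref{eq:2varCR-fa} confirms that both right-hand sides vanish there (the factor $S-SB$ kills the first equation and the factor $CR$ the second), so it is genuinely a fixed point of the reduced system.

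Next I would compute the Jacobian of the reduced system and evaluate it at $(S,0)$. The crucial simplification is that the off-diagonal entry $\partial(CR')/\partial(SB) = -h_C\,CR$ vanishes when $CR = 0$, so the Jacobian is upper triangular and its eigenvalues are simply its diagonal entries,
\begin{align*}
\lambda_1 &= -f_S(S+A_S), \\
\lambda_2 &= f_C(1-S) - h_C(S+A_C).
\end{align*}
Because $f_S, S, A_S > 0$, the eigenvalue $\lambda_1$ is always negative, so stability hinges entirely on the sign of $\lambda_2$. Observe that $A_S$ (hence $\lambda_S$) enters only through $\lambda_1$; this is precisely why the victory threshold stated in Section~\ref{ssec:model:fa} involves $\lambda_C$ but not $\lambda_S$, and why $\lambda_S>0$ alone merely prevents Blue from losing its own territory.

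The final step is to rewrite $\lambda_2 < 0$ as the claimed threshold. Dividing the inequality $f_C(1-S) < h_C(S+A_C)$ by $h_C$ and using $r_C = f_C/h_C$ gives $r_C(1-S) < S + A_C$, which rearranges to $r_C - A_C < S(1+r_C)$, i.e. $\frac{r_C - A_C}{1+r_C} < S$. Invoking the standard criterion that a fixed point is stable iff both eigenvalues have negative real part, the equilibrium is stable if and only if this inequality holds. As a consistency check I would note that setting $A_C = 0$ recovers the basic-model Blue victory condition $r_C < \frac{S}{1-S}$ of Theorem~\ref{thm-vic-con}.

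I do not anticipate a serious obstacle: the triangular structure of the Jacobian makes the eigenvalue computation immediate, and the only real care required is the bookkeeping that converts the sign of $\lambda_2$ into the $A_C$-dependent threshold, together with verifying that the $\lambda_1 < 0$ branch is unconditional. The no-limit-cycle result (Proposition~\ref{prop:nocycle-intervention}) then guarantees that this local stability statement in fact describes the global fate of trajectories.
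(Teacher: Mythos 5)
Your proposal is correct and follows essentially the same route as the paper: reduce to Eqs.~\ref{eq:2varSB-fa}--\ref{eq:2varCR-fa}, evaluate the Jacobian at $(SB,CR)=(S,0)$, read off the eigenvalues $-f_S(S+A_S)$ and $f_C(1-S)-h_C(S+A_C)$, and rearrange the sign condition on the second into $\frac{r_C-A_C}{1+r_C}<S$. Your explicit remark about the triangular structure of the Jacobian and the $A_C=0$ consistency check are minor additions, not a different argument.
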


\begin{proof}
We first compute the Jacobian of the right hand size of differential equation defined in Eqs.~\ref{eq:2varSB-fa}--\ref{eq:2varCR-fa}
\begin{align*}
\mathbf{J_{fi}}(SB,CR) = 
\begin{pmatrix}
f_{S}(S-2SB - A_S)-h_S CR & -h_SSB\\
-h_C CR & f_C(1-S-2CR)-h_C(SB+A_C)
\end{pmatrix}
\end{align*}
For the Blue victory equilibrium ($SB=S, CR=0$), the characteristic polynomial is 
\begin{align*}
(-f_{S}(S + A_S) - \lambda)(f_C(1-S)-h_C(S+A_C) - \lambda)=0\,.
\end{align*}
The first eigenvalue, $-f_{S}(S + A_S)$, is always negative and the second  eigenvalue, $f_C(1-S)-h_C(S+A_C)$, is negative if $\frac{r_C-A_C}{1+r_C} < S$.  Writing out this condition in terms of the direct foreign intervention parameter $\lambda_C = A_C h_C$ produces the condition: $\lambda_C>f_C(1-S) + h_C S$.
\end{proof}

Theorem~\ref{thm-vic-con-fa} provides a necessary condition for Blue victory.  We have not been able to prove the stability characteristics for the stalemate equilibrium analytically, which would provide the sufficient conditions for Blue victory. This is given in Conjecture~\ref{con-vic-con-fa} and extensive numerical experimentation makes us confident that this conjecture does hold.
\begin{conjecture}
\label{con-vic-con-fa} 
\item The stalemate equilibrium defined in Eqs.~\ref{eq:SG-fa}--\ref{eq:CR-fa} is stable if and only if $\frac{r_C-A_C}{1+r_C} > S$.
\end{conjecture}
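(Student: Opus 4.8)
The plan is to follow the same template as the stalemate case in the proof of Theorem~\ref{thm-vic-con}. I would work with the two independent equations Eqs.~\ref{eq:2varSB-fa}--\ref{eq:2varCR-fa}, evaluate the Jacobian $\mathbf{J_{fi}}$ (computed in the proof of Theorem~\ref{thm-vic-con-fa}) at the stalemate root $(SB_{fi},CR_{fi})$, and use the fact that for a planar system both eigenvalues have negative real part if and only if $\det\mathbf{J_{fi}}>0$ together with a negative trace. Since Proposition~\ref{prop:nocycle-intervention} already excludes limit cycles, settling these two sign conditions determines the asymptotic behavior completely.

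The first step is to collapse the diagonal of $\mathbf{J_{fi}}$ using the equilibrium relations Eqs.~\ref{eq:stale-faI}--\ref{eq:stale-faII}, exactly as the diagonal of $\mathbf{J}$ reduced to $-f_S SB_b$ and $-f_C CR_b$ in the basic model. Substituting $h_S CR_{fi}=f_S(SB_{fi}+A_S)(S-SB_{fi})/SB_{fi}$ into the top-left entry should, after cancellation, give $\mathbf{J}_{11}=-f_S(SB_{fi}^2+A_S S)/SB_{fi}$, while $h_C(SB_{fi}+A_C)=f_C(1-S-CR_{fi})$ reduces the bottom-right entry to $\mathbf{J}_{22}=-f_C CR_{fi}$. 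Both are negative whenever the root lies in the positive quadrant, so the trace is automatically negative there and the entire question hinges on the determinant.

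Computing $\det\mathbf{J_{fi}}=\mathbf{J}_{11}\mathbf{J}_{22}-(-h_S SB_{fi})(-h_C CR_{fi})$ and invoking $f_Sf_C-h_Sh_C=h_Sh_C(r_Sr_C-1)>0$ from the dominance assumption (Ineq.~\ref{eq:dominance}), I expect the determinant to factor as $\frac{CR_{fi}}{SB_{fi}}\bigl[h_Sh_C(r_Sr_C-1)SB_{fi}^2+f_Sf_C A_S S\bigr]$. The bracket is strictly positive, and $SB_{fi}>0$ always, since it is the positive root of the quadratic behind Eq.~\ref{eq:SG-fa}, whose constant term $-f_Sf_C A_S S$ is negative and therefore forces one positive and one negative root. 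Consequently $\det\mathbf{J_{fi}}>0$ \emph{if and only if} $CR_{fi}>0$, and the whole theorem reduces to the algebraic claim $CR_{fi}>0 \iff \frac{r_C-A_C}{1+r_C}>S$ (with $\det\mathbf{J_{fi}}<0$, hence a saddle, in the complementary case).

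The main obstacle is this last equivalence: $CR_{fi}$ is only available implicitly through the unwieldy root $SB_{fi}$, so its sign cannot be read off directly. Writing $T\equiv r_C(1-S)-A_C$, Eq.~\ref{eq:CR-fa} gives $CR_{fi}=(T-SB_{fi})/r_C$, and the threshold condition is precisely $T>S$; thus I must decide whether the positive root lies below $T$. The trick I would use is to evaluate the quadratic $g$ (obtained by clearing denominators in Eqs.~\ref{eq:stale-faI}--\ref{eq:stale-faII}, normalized to have leading coefficient $h_Sh_C(r_Sr_C-1)>0$) at the \emph{accessible} point $x=S$ rather than at $SB_{fi}$ itself: a short computation should collapse its constant and linear terms to $g(S)=h_Sh_C\,S\,(T-S)$. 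Since $g(x)=h_Sh_C(r_Sr_C-1)(x-SB_{fi})(x-\rho_-)$ with the other root $\rho_-<0<S$, the factor $(S-\rho_-)$ is positive, so $\mathrm{sign}(S-SB_{fi})=\mathrm{sign}\,g(S)=\mathrm{sign}(T-S)$. Decomposing $T-SB_{fi}=(T-S)+(S-SB_{fi})$ then shows both summands share the sign of $T-S$, whence $\mathrm{sign}(CR_{fi})=\mathrm{sign}(T-S)$, which is exactly the claimed criterion. Verifying the clean cancellation in $g(S)$ is the one calculation I would double-check, since the argument rests entirely on that identity.
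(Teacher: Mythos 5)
The paper does not prove this statement at all: it is explicitly left as a conjecture, supported only by ``extensive numerical experimentation.'' Your argument, if written out, would settle it, and I have checked the steps: the equilibrium relations Eqs.~\ref{eq:stale-faI}--\ref{eq:stale-faII} do collapse the diagonal of $\mathbf{J_{fi}}$ to $\mathbf{J}_{11}=-f_S(SB_{fi}^2+A_SS)/SB_{fi}$ and $\mathbf{J}_{22}=-f_CCR_{fi}$; the determinant does factor as $\frac{CR_{fi}}{SB_{fi}}\bigl[(f_Sf_C-h_Sh_C)SB_{fi}^2+f_Sf_CA_SS\bigr]$ with a strictly positive bracket (using $\lambda_S>0$ and the dominance assumption); and the quadratic behind Eq.~\ref{eq:SG-fa} has negative constant term $-f_Sf_CA_SS$, so $SB_{fi}>0$ and the second root is negative. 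The identity you flagged as the one to double-check holds: writing the quadratic in unexpanded form $g(x)=f_Ch_Sx(1-S)-f_Cf_S(x+A_S)(S-x)-h_Ch_Sx(x+A_C)$, the middle term vanishes at $x=S$ and $g(S)=h_Sh_CS\bigl(r_C(1-S)-A_C-S\bigr)=h_Sh_CS(T-S)$, so indeed $\mathrm{sign}(S-SB_{fi})=\mathrm{sign}(T-S)$ and hence $\mathrm{sign}(CR_{fi})=\mathrm{sign}(T-S)$, which is exactly the threshold $\frac{r_C-A_C}{1+r_C}>S$. Two small points worth making explicit if you write this up: when the inequality fails, the stalemate root has $CR_{fi}<0$ (it leaves the physically meaningful region) and is a saddle, so ``unstable'' is the right conclusion but the equilibrium is also no longer a stalemate in the modeling sense; and at equality the stalemate root collides with the Blue victory equilibrium ($SB_{fi}=S$, $CR_{fi}=0$, one zero eigenvalue), a transcritical bifurcation consistent with the strict inequalities in the conjecture and in Theorem~\ref{thm-vic-con-fa}. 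This is a genuine improvement on the paper, which offers no analytic argument here.
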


\subsection{Opportunistic Population}

\label{sec:proofs:dynamicS} We next show that victory is the only possible
outcome of the opportunistic population model of section~\ref%
{sec:model:dynamics}.

\begin{theorem}
\label{thm-vic-con-dynamics} The following four statements hold for the
system of differential equations defined by Eqs.~\ref{eq:model-dynamics}:

\begin{itemize}
\item The equilibrium $CR = SB = 0$ and $S= \frac{1}{2}$ is never stable.

\item The Blue victory equilibrium ($CR = 0$ and $SB = S = 1$ ) is always
stable.

\item The Red victory equilibrium ($SB = S = 0$ and $CR = 1$) is always
stable.

\item The stalemate equilibrium defined in Eqs.~\ref{eq:balstale:SB}--\ref%
{eq:balstale:S} is never stable.
\end{itemize}
\end{theorem}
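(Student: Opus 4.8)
The plan is to linearize the system Eqs.~\ref{eq:model-dynamics} about each of the four equilibria and read off the signs of the eigenvalues of the $3\times3$ Jacobian $\mathbf{J}(SB,CR,S)$. The first two rows of $\mathbf{J}$ are obtained exactly as in the basic model (Eqs.~\ref{eq:2varSB}--\ref{eq:2varCR}), while the third row, coming from $S'$, simplifies dramatically: writing $u=SB+1-S-CR$ and $v=CR+S-SB$ one has $u+v=1$ identically, so $\partial S'/\partial SB=\alpha$, $\partial S'/\partial CR=-\alpha$, and $\partial S'/\partial S=-2\alpha$, independent of the equilibrium. Thus the bottom row of every evaluated Jacobian is $(\alpha,\,-\alpha,\,-2\alpha)$.

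For the three ``corner'' equilibria I would exploit structural zeros. At the disarmed stalemate ($SB=CR=0$, $S=\tfrac12$) every off-diagonal entry in the first two rows carries a factor of $SB$ or $CR$ and hence vanishes, so $\mathbf{J}$ is lower triangular with diagonal $(f_SS,\,f_C(1-S),\,-2\alpha)=(\tfrac12 f_S,\,\tfrac12 f_C,\,-2\alpha)$; two positive eigenvalues make it a saddle, hence unstable. At the Blue victory point ($SB=S=1$, $CR=0$) the entire $CR$-row decouples because $J_{21}=-h_CCR$ and $J_{23}=-f_CCR$ both vanish, leaving the eigenvalue $J_{22}=f_C(1-S)-h_CS=-h_C<0$ together with the $2\times2$ block in $(SB,S)$, namely $\begin{pmatrix}-f_S & f_S\\ \alpha & -2\alpha\end{pmatrix}$, whose trace $-(f_S+2\alpha)$ is negative and determinant $f_S\alpha$ is positive; all three eigenvalues therefore have negative real part and the equilibrium is stable for every parameter choice. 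The Red victory point is handled by the mirror-image computation, in which instead the $SB$-row decouples, giving the eigenvalue $-h_S<0$ and the block $\begin{pmatrix}-f_C & -f_C\\ -\alpha & -2\alpha\end{pmatrix}$ with the same favourable trace--determinant signs.

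The last bullet --- instability of the balanced stalemate --- is the only real obstacle, and I would deliberately avoid computing its three eigenvalues explicitly. First I would use the equilibrium relations $f_S(S^{\ast}-SB^{\ast})=h_SCR^{\ast}$ and $f_C(1-S^{\ast}-CR^{\ast})=h_CSB^{\ast}$ (valid since $SB^{\ast},CR^{\ast}>0$) to collapse the diagonal entries to $J_{11}=-f_SSB^{\ast}$ and $J_{22}=-f_CCR^{\ast}$, exactly as in the basic-model stalemate computation. The Jacobian then reads
\begin{equation*}
\mathbf{J}^{\ast}=\begin{pmatrix} -f_SSB^{\ast} & -h_SSB^{\ast} & f_SSB^{\ast}\\ -h_CCR^{\ast} & -f_CCR^{\ast} & -f_CCR^{\ast}\\ \alpha & -\alpha & -2\alpha\end{pmatrix}.
\end{equation*}
The key step is to evaluate $\det\mathbf{J}^{\ast}$: pulling $SB^{\ast}$ out of the first row and $CR^{\ast}$ out of the second and expanding along the bottom row yields
\begin{equation*}
\det\mathbf{J}^{\ast}=\alpha\,SB^{\ast}CR^{\ast}\bigl(f_Ch_S+f_Sh_C+2h_Sh_C\bigr),
\end{equation*}
which is strictly positive. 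Since $\det\mathbf{J}^{\ast}$ equals the product of the three eigenvalues, and any real $3\times3$ matrix all of whose eigenvalues have negative real part must have negative determinant (either three negative reals, or one negative real times the positive modulus-squared of a conjugate pair), a positive determinant forces at least one eigenvalue into the right half-plane. Hence the balanced stalemate is never stable, completing the proof. The only place where genuine care is needed is this determinant-sign argument; everything else reduces to the decoupling structure already used for the basic model.
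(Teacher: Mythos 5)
Your proposal is correct and follows essentially the same route as the paper: linearize, read off the eigenvalue signs at the three corner equilibria, and show the balanced stalemate's Jacobian has determinant $\alpha\, SB^{\ast}CR^{\ast}\left(f_Ch_S+f_Sh_C+2h_Sh_C\right)>0$, hence a positive real eigenvalue (the paper phrases this last step via the Intermediate Value Theorem on the characteristic polynomial, which is equivalent to your product-of-eigenvalues parity argument). One small point in your favor: your entry $\partial S'/\partial CR=-\alpha$ is the correct one -- the paper's displayed general Jacobian carries a sign typo there, although it uses $-\alpha$ when evaluating at the stalemate, so the conclusions agree.
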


\begin{proof}
We first compute the Jacobian of Eqs.~\ref{eq:model-dynamics}
\begin{align*}
\mathbf{J}_{op}(SB,CR,S) = 
\begin{pmatrix}
f_{S}(S-2SB)-h_S CR & -h_SSB & f_S SB\\
-h_C CR & f_C(1-S-2CR)-h_CSB & -f_C CR \\
\alpha & \alpha & -2\alpha \\
\end{pmatrix}
\end{align*}

A solution $(SB^*,CR^*,S^*)$ to the differential equation is stable if the three eigenvalues of $\mathbf{J}_{op}(SB^*,CR^*,S^*)$ have negative real parts.  
By inspection the equilibrium with $(SB^*,CR^*,S^*) = (0,0,\frac{1}{2})$ is not stable for any parameter values. \\

\noindent {\bf Blue Victory}\\
For the equilibrium $(SB^*,CR^*,S^*) = (1,0,1)$ the characteristic polynomial is 
\begin{align*}
(-h_{C} - \lambda)(\lambda^2+\lambda(2\alpha+f_S) +\alpha f_S)=0\,.
\end{align*}
The first eigenvalue, $-h_{C}$, is always negative and the second and third eigenvalues are always negative by appealing to the quadratic formula for the second term.  \\

\noindent {\bf Red Victory}\\
For the equilibrium $(SB^*,CR^*,S^*) = (0,1,0)$ the characteristic polynomial is 
\begin{align*}
(-h_{S} - \lambda)(\lambda^2+\lambda(2\alpha+f_C) +\alpha f_C)=0\,.
\end{align*}
The first eigenvalue, $-h_{S}$, is always negative and the second and third eigenvalues are always negative by appealing to the quadratic formula for the second term.  \\

\noindent {\bf Stalemate}\\

Substituting the equilibrium points Eqs.~\ref{eq:balstale:SB}--\ref{eq:balstale:S} into the Jacobian yields
\begin{align*}
\mathbf{J}_{op}(SB^*,CR^*,S^*) = 
\begin{pmatrix}
-\frac{f_Sr_{C}}{2+r_{S}+r_{C}} & -\frac{h_Sr_{C}}{2+r_{S}+r_{C}}&  \frac{f_Sr_{C}}{2+r_{S}+r_{C}} \\
- \frac{h_Cr_{S}}{2+r_{S}+r_{C}} & -\frac{f_Cr_{S}}{2+r_{S}+r_{C}}& - \frac{f_Cr_{S}}{2+r_{S}+r_{C}}  \\
\alpha & -\alpha & -2\alpha \\
\end{pmatrix}
\end{align*}
To calculate the eigenvalues, we need to find the roots of the characteristic polynomial $$g(\lambda) = \det\left(\mathbf{J}_{op}(SB^*,CR^*,S^*) - \lambda I\right)\,.$$  
Because $\lim_{\lambda \to \infty} g(\lambda) = -\infty$, if $g(0) = \det\left(\mathbf{J}_{op}(SB^*,CR^*,S^*)\right) > 0$, 
then by the Intermediate Value Theorem there must be a positive eigenvalue and the stalemate equilibrium must be unstable. 
We will now show that this is the case for all possible parameter values.  Computing the determinant:

\begin{small}
\begin{align*}
 \det\left(\mathbf{J}_{op}(SB^*,CR^*,S^*)\right) & = \frac{\alpha\left(-2f_Sr_{C}f_Cr_{S} + h_Sr_{C}f_Cr_{S} + f_Sr_{C}h_Cr_{S} +f_Sr_{C}f_Cr_{S} + f_Sr_{C}f_Cr_{S} +2h_Sr_{C}h_Cr_{S}    \right)}{(2+r_{S}+r_{C})^2} \\
& = \frac{\alpha r_{S}r_{C}\left(h_Sf_C + f_Sh_C  +2h_Sh_C    \right) }{(2+r_{S}+r_{C})^2} >0
\end{align*}
\end{small}
Therefore there will always be a positive eigenvalue associated with the stalemate equilibrium  
defined by Eqs.~\ref{eq:balstale:SB}--\ref{eq:balstale:S}, and so it cannot be stable. 
\end{proof}

\bibliographystyle{plain}
\bibliography{insurgencies}

\end{document}